\newtheorem{thm}{Theorem}
\numberwithin{thm}{section}
\newtheorem{prop}[thm]{Proposition}
\newtheorem{lem}[thm]{Lemma}
\newtheorem{cor}[thm]{Corollary}
\theoremstyle{definition}
\newtheorem{exmp}[thm]{Example}
\theoremstyle{remark}
\newtheorem{rem}[thm]{Remark}
\newcommand{\ip}[2]{\ensuremath{({#1}\,|\,{#2})}} 
\providecommand{\BBb}[1]{{\mathbb{#1}}}
\newcommand{\Bcirc}{\overset{\lower 1.5pt%
              \hbox{$@,@,@,@,@,\scriptscriptstyle\circ$}}B{}}
\newcommand{\Binfty}{\overset{\lower 1.5pt%
              \hbox{$@,@,@,@,@,\scriptscriptstyle\infty$}}B{}}
\newcommand{\bigdot}{\mathbin{\raise.65\jot\hbox{$\scriptscriptstyle\bullet$}}}
\newcommand{\B}{{\BBb B}}
\newcommand{\C}{{\BBb C}}
\newcommand{\Cn}{{\BBb C}^n}
\newcommand{\erd}{\overset{\lower 1pt\hbox{\large.}}{e}
                  \overset{\lower 1pt\hbox{\large.}}{r}}
\newcommand{\Fcirc}{\overset{\lower 1.5pt%
               \hbox{$@,@,@,@,@,\scriptscriptstyle\circ$}}F{}}
\newcommand{\fracc}[2]{{
                \textstyle\frac{#1}{\raise 1pt\hbox{$\scriptstyle #2$}}}}
\newcommand{\fracci}[2]{{\frac{#1}{\raise 1pt\hbox{$\scriptscriptstyle #2$}}}}
\newcommand{\im}{\operatorname{i}}
\renewcommand{\Im}{\operatorname{Im}}
\newcommand{\lap}{\operatorname{\Delta}}
\newcommand{\mlap}{-\!\operatorname{\Delta}}
\renewcommand{\Re}{\operatorname{Re}}
\newcommand{\R}{{\BBb R}}
\newcounter{enmcount}\renewcommand{\theenmcount}{{\rm\arabic{enmcount}}}
\newcounter{rmcount}\renewcommand{\thermcount}{{\rm\roman{rmcount}}}
\newenvironment{rmlist}{%
\begin{list}{{\rm(\thermcount)}}{\setlength{\labelwidth}{\leftmargin}%
\usecounter{rmcount}}}{\end{list}}
\newcounter{Rmcount}\renewcommand{\theRmcount}{{\rm\Roman{Rmcount}}}
\newenvironment{Rmlist}{%
\begin{list}{{\rm(\theRmcount)}}{\setlength{\labelwidth}{\leftmargin}%
\usecounter{Rmcount}}}{\end{list}}
\newcommand{\set}[2]{\{\,#1 \mid #2\,\}}
\newcommand{\Set}[2]{\bigl\{\,#1\bigm| #2\,\bigr\}}
\newcommand{\Span}{\operatorname{span}}
\newcommand{\tr}{\operatorname{tr}}
\begin{document}
\title[Log-Convex Decay in Non-Selfadjoint Dynamics]{Characterisation of Log-Convex Decay in Non-Selfadjoint Dynamics}
\keywords{Log-convex decay, non-selfadjoint, hyponormal, positively accretive operators, short-time behaviour.}
\author{Jon Johnsen}
\address{Department of Mathematical Sciences, Aalborg University, Skjernvej 4A, DK-9220 Aalborg {\O}st, Denmark}
\email{jjohnsen@math.aau.dk}
\thanks{Supported by the Danish Research Council, Natural Sciences grant no.~4181-00042.%
\\[9\jot]{\tt To appear in Electronic Research Announcements in Mathematical Sciences, vol.\ 25 (2018)}}

\subjclass[2010]{35E15, 47D06}
\begin{abstract} 
The short-time and global behaviour are studied for an
autonomous linear evolution equation, which is defined by a generator
inducing a uniformly bounded holomorphic semigroup in a Hilbert space.
A general  necessary and sufficient condition is introduced under which the norm of the solution is shown to be a log-convex and
strictly decreasing function of time, and differentiable also at the initial time with a derivative
controlled by the lower bound of the generator, which moreover is shown to be positively accretive. Injectivity of
holomorphic semigroups is the main technical tool.
\end{abstract}
\maketitle

\section{Introduction}
\thispagestyle{empty}
The subject of this note is the global and short-time behaviour of the solutions to 
an autonomous linear evolution equation having a possibly non-selfadjoint generator $-A$.

It is assumed that $A$ is an accretive operator with domain $D(A)$ in a complex Hilbert space $H$,
with norm $|\cdot|$ and inner product $\ip{\cdot}{\cdot}$, and that $-A$ generates a uniformly bounded, holomorphic
$C_0$-semigroup $e^{-zA}$ for $z$ in an open sector of the form $\Sigma_{\delta}=\set{z\in\C}{-\delta< \arg
  z<\delta}$.  Then the ``height'' function
\begin{equation}
  h(t)=|e^{-tA}u_0|
\end{equation}
is studied for the solution $u(t)=  e^{-tA}u_0$ to the following Cauchy problem, where only initial
data $u_0\ne0$ are considered,
\begin{equation}  \label{ivp-id}
   \partial_t u+Au=0 \quad\text{ for $t>0$},\qquad u(0)=u_0\quad\text{ in $H$}.
\end{equation}
The intention is to investigate the algebraic conditions on $A$, which give a \emph{log-convex} decay of $h(t)$.

In a recent article on final value problems by A.-E. Christensen and the author \cite{ChJo18ax}, cf.\
also \cite{ChJo18}, it
was elucidated and proved (except for one remnant) that if $A$ is an elliptic variational operator
and $A$ is \emph{hyponormal}, cf.\ work of Janas~\cite{Jan94}, then in terms of the numerical range and the lower bound 
\begin{equation}
  \nu(A)=\Set{\ip{Ax}{x}}{x\in D(A),\ |x|=1}, \qquad m(A)=\inf \,\Re\nu(A),   
\end{equation}
there is a ``nice'' decay of the height function:
\begin{quote}
   $h(t)$ is \emph{strictly positive, strictly decreasing} and \emph{strictly convex} on the closed half\-line $t\ge0$,
   and $h(t)$ is differentiable at $t=0$, for $|u_0|=1$ generally with $h'(0)\le -m(A)$, though with 
   \begin{equation}
         \label{h'-intro}
    h'(0)=-\Re\ip{Au_0}{u_0} \qquad\text{ if in addition $u_0\in D(A)$}.
   \end{equation}
 \end{quote}
 
First of all this shows how the \emph{short-time} behaviour at $t=0$ via $h'(0)$ is specifically controlled by  $\nu(A)$, the
numerical range of $A$, and not by its spectrum $\sigma(A)$; whereas the crude decay estimate $h(t)\le
Ce^{-t\underline\sigma}$ for $t\to\infty$ is given by the spectral abscissa
$\underline\sigma=\inf\Re\sigma(A)$ of $A$, say in case $A^*=A\ge0$. 

Secondly, the \emph{global} behaviour of the height $h(t)$ is 
expressed in its strict decrease and strict convexity: even if $A$ has eigenvalues in
$\C\setminus\R$, as may be the case, they do not induce oscillations in the \emph{size} of the
solution $e^{-tA}u_0$ for such $A$---this is ruled out by strict convexity, which
thus can be seen as a stiffness in the decay of $h(t)$.

The present paper generalises the above-mentioned results of \cite{ChJo18,ChJo18ax} in three ways: First the restriction
to variational generators $-A$ is completely removed. 

Secondly, the additional assumption that $A$ is
hyponormal is replaced by the weaker condition that $A$ satisfies the following for vectors
$x\in D(A^2)$ such that $|x|=1$, 
\begin{equation} \label{A-cond}
   2\big(\Re \ip{Ax}{x}\big)^2\le\Re\ip{A^2x}{x}+|Ax|^2.
\end{equation}
The third improvement is the stronger conclusion that $h(t)$ actually is log-convex%
\footnote{A fortunately inconsequential flaw in the argument given for the strict convexity in
\cite{ChJo18ax} is pointed out in Remark~\ref{A2-rem} below. A remedy of this is provided by means of 
the present more general results.%
}.
In fact condition \eqref{A-cond} \emph{characterises} the $A$ for which $h(t)$ is
log-convex; cf.~Theorem~\ref{logcon-thm} below.

Somewhat surprisingly, strict monotone decay $h(t)\searrow0$ for $t\to\infty$ \emph{results}
from (log-)convexity of $h$ (since $e^{-tA}$ is uniformly bounded), hence follows whenever the
generator $A$ fulfils \eqref{A-cond}. The convexity of $h$ then implies existence of $h'(0)=\inf
h'<0$ and that \eqref{h'-intro} holds. The latter shows that $A$ is barely better 
than accretive ($m(A)\ge0$) in the sense that its numerical range is contained in the \emph{open} right half-plane,
\begin{equation} \label{C+-id}
  \nu(A)\subset \set{z\in\C}{\Re z>0} =: \C_+.
\end{equation}
It seems appropriate to call $A$ a \emph{positively accretive} operator, when it has the property \eqref{C+-id}. This is a milder
condition on $A$ than strict accretivity ($m(A)>0$) used by Kato \cite{Kat95}. The
elliptic variational generators in \cite{ChJo18,ChJo18ax} are all strictly accretive, but as
described, there is no need to find a substitute assumption for this, as any $A$ satisfying
criterion \eqref{A-cond} automatically is positively accretive.

To shed more light on the log-convexity criterion \eqref{A-cond}, recall that
every $B\in\B(H)$ satisfies $B=X+\im Y$ for unique selfadjoint operators $X, Y\in\B(H)$,
namely $X=\frac12 (B+B^*)$ and $Y=\frac1{2\im}(B-B^*)$. Hence
\begin{align} \label{AXY-id}
  \Re\ip{Bu}{u}&= \Re\ip{Xu+\im Yu}{u}= \ip{Xu}{u},
  \\
  \Re\ip{B^2u}{u}&= \Re\ip{(X^2-Y^2+\im(XY+YX))u}{u}=|Xu|^2-|Yu|^2,
  \\
   |Bu|^2&= |Xu+\im Yu|^2=|Xu|^2+|Yu|^2+2\Im\ip{Xu}{Yu}.
\end{align}
As the terms $\pm|Yu|^2$ cancel when the last two lines are added, 
\eqref{A-cond} reduces for $B=X+\im Y$ in $\B(H)$ to 
  \begin{equation} \label{A-cond''}
        \ip{Xu}{u}^2\le (|Xu||u|)^2 +\Im\ip{Xu}{Yu}|u|^2\qquad\text{for all $u\in H$}.
  \end{equation}
Here it is noteworthy that $Y$ only appears in one term. In view of the Cauchy--Schwarz inequality,
it is clear that the above is fulfilled when $X$ and $Y$ fit so together that the imaginary part is
positive for all $u\in H$. 
In terms of the commutator $[Y,X]=YX-XY$, 
one may write \eqref{A-cond''} equivalently as
  \begin{equation} \label{A-cond'''}
        \ip{Xu}{u}^2\le (|Xu||u|)^2 +\frac12\Im\ip{[Y,X]u}{u}|u|^2\qquad\text{for all $u\in H$}.
  \end{equation}
However, \eqref{A-cond''} and \eqref{A-cond'''} are always violated for certain $B\in\B(H)$ when
$\dim H\ge2$; cf.\ Remark~\ref{Aneg-rem} below.    

So, in other words, criterion \eqref{A-cond} is not fulfilled for every operator
$A$ in $H$, neither for bounded $A$, nor for  $n\times n$-matrices, $n\ge 2$.
 It is therefore envisaged that \eqref{A-cond} can give rise to
interesting examples when $A$ is a suitable realisation of a partial differential operator.

\section{Discussion and Main Results}  \label{disc-sect}
The reader is assumed familiar with semigroup theory, for which the book of Pazy
\cite{Paz83} could be a reference; the simpler Hilbert space case is exposed e.g.\ by Grubb
\cite[Ch.\ 14]{G09}. It is briefly mentioned that there is a bijective
correspondence between the $C_0$-semigroups $e^{-tA}$ in $\B(H)$ that are uniformly bounded, i.e.\ 
$\|e^{-tA}\|\le M$ for $t\ge0$, and holomorphic in some sector $\Sigma_\delta\subset \C$ for 
$\delta\in\,]0,\frac\pi2[\,$, and the
densely defined, closed operators $A$ in $H$ satisfying a resolvent estimate 
$|\lambda|\|(A+\lambda I)^{-1}\|\le C$ for all $\lambda \in \{0\}\cup\Sigma_{\delta+\pi/2}$.

It is classical that, since $\sigma(A)\subset\set{z\in\C}{\Re z\ge\varepsilon}$ for some
$\varepsilon>0$, there is a bound $\|e^{-tA}\|\le M_\eta e^{-t\eta}$ for $t\ge0$,
$0<\eta<\varepsilon$. This yields the crude decay estimate $h(t)\le M_\eta e^{-t\eta}|u_0|$.

\bigskip

Log-convexity may be a new aspect in the context, so the discussion is begun with this.
First it is recalled that for a strictly positive function $f\colon \R\to \,]0,\infty[\,=:\R_+$, log-convexity means
that $\log f(t)$ is convex, that is, for all $r\le  t$ in $\R$ and $0<\theta<1$,
\begin{equation} \label{f-logcon}
  f((1-\theta)r+\theta t)\le  f(r)^{1-\theta}f(t)^\theta.
\end{equation}
As a slight extension, this also makes sense for non-negative functions $f\colon \R\to[0,\infty[\,$.

A classical exercise shows for the intermediate point $s=(1-\theta)r+\theta t$
that one has $\theta =(s-r)/(t-r)$.
Explicitly log-convexity therefore means for the height function that, for $0\le r< s<t$,
\begin{equation} \label{h-logcon}
  \big|e^{-sA}u_0\big| \le \big|e^{-rA}u_0\big|^{1-\frac{s-r}{t-r}} \big |e^{-tA}u_0\big|^{\frac{s-r}{t-r}}.
\end{equation}
The operator $A$ is just a positive scalar if $\dim H=1$, so \eqref{h-logcon} is then an identity because
of the functional equation of the exponential function $e^{-tA}$ (whereas its slightly weaker
property of strict convexity is expressed via a sharp inequality, oddly enough). 
For  $\dim H>1$ the inequality \eqref{h-logcon} is by no means obvious for the operator function
 $e^{-tA}$ in $\B(H)$;  it is the main subject of this paper.
 
 It is noteworthy that the power function $t\mapsto t^\theta$
 in \eqref{h-logcon} does not require its continuous extension to $t=0$, for since $e^{-tA}u_0$ 
 is holomorphic, the height function fulfils $h(t)>0$, or equivalently $e^{-tA}u_0\ne0$, for $t\ge0$.

 This follows from the restriction to $u_0\ne0$ and the crucial fact that $e^{-zA}$ is an
 \emph{injection} for all $z\in \Sigma_\delta$:
 
 \begin{lem}[\cite{ChJo18ax}] \label{inj-lem}
 Whenever $-A$ generates a holomorphic semigroup $e^{-zA}$ in $\B(X)$ for some complex Banach space
 $X$, and $e^{-zA}$ is holomorphic in the open sector $\Sigma_{\delta}\subset \C$ given by $|\arg z|<\delta$ for some  
 $\delta>0$, then the operator $e^{-zA}$ is injective on $X$ for each such $z$.
 \end{lem}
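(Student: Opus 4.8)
The plan is to exploit holomorphy of the $X$-valued map $z\mapsto e^{-zA}x$ together with the identity (uniqueness) theorem, thereby reducing the asserted injectivity of $e^{-zA}$ to the trivial injectivity of a resolvent $(\lambda I+A)^{-1}$.

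First I would fix $z_0\in\Sigma_\delta$ and suppose $e^{-z_0A}x=0$ for some $x\in X$; the goal is $x=0$. Since $\arg(z_0+t)$ lies between $0$ and $\arg z_0$ for every $t\ge0$, the ray $z_0+[0,\infty[$ is entirely contained in $\Sigma_\delta$, and the semigroup law $e^{-(z_0+t)A}=e^{-tA}e^{-z_0A}$ then gives $e^{-(z_0+t)A}x=0$ for all $t\ge0$. Consequently the function $f(z)=e^{-zA}x$, which is holomorphic on the connected open set $\Sigma_\delta$, vanishes on a half-line that has accumulation points in the interior of $\Sigma_\delta$ (e.g.\ $z_0$ itself). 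By the identity theorem for Banach-space-valued holomorphic functions — applied directly, or after composing with an arbitrary $x^*\in X^*$ and using the scalar version — it follows that $f\equiv0$, i.e.\ $e^{-zA}x=0$ for every $z\in\Sigma_\delta$; in particular $e^{-tA}x=0$ for all $t>0$.

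To finish, I would invoke the Laplace transform representation of the resolvent, $(\lambda I+A)^{-1}x=\int_0^\infty e^{-\lambda t}e^{-tA}x\,dt$, valid for $\Re\lambda$ large enough (the semigroup being uniformly bounded). The integrand vanishes identically in $t$, so $(\lambda I+A)^{-1}x=0$; but $(\lambda I+A)^{-1}$ maps $X$ bijectively onto $D(A)$, hence is injective, and therefore $x=0$. This shows $e^{-z_0A}$ is injective, as claimed. The only points needing a little care are the vector-valued identity theorem and the precise range of validity of the resolvent formula, both of which are entirely standard in the theory of holomorphic semigroups (see e.g.\ \cite{Paz83}), so no genuine obstacle arises; one may alternatively replace the last step by letting $t\to0^+$ and using strong continuity to get $x=\lim_{t\to0^+}e^{-tA}x=0$.
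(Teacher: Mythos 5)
Your argument is correct and is essentially the same as the one the paper relies on (the ``elementary proof ... using unique analytic continuation'' of Proposition~1 in \cite{ChJo18ax}): propagate the vanishing of $e^{-z_0A}x$ along a ray via the semigroup law, kill $z\mapsto e^{-zA}x$ on all of $\Sigma_\delta$ by the identity theorem, and recover $x=0$ from strong continuity at $t=0$ (or, as you note, from injectivity of the resolvent). No gaps; the vector-valued identity theorem via functionals and the inclusion of the ray $z_0+[0,\infty[$ in $\Sigma_\delta$ are handled correctly.
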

 
The injectivity is for $t>0$ clearly equivalent to the geometric property that two solutions
$e^{-tA}v$ and $e^{-tA}w$ to the differential equation $u'+Au=0$ cannot have any points of confluence in $X$ when $v\ne w$. 
One obvious consequence of this is the backward uniqueness of $u'+Au=0$; i.e.\ $u(T)=0$ implies $u(0)=0$. 
But injectivity was seemingly first obtained in \cite{ChJo18ax},
cf.\ the elementary proof in Proposition~1 there, using unique analytic continuation. 
\cite[Cor.\ 4.3.9]{Rau91} is analogous, but is given for the Laplacian $A=\mlap$ on
Euclidean space, though for local vanishing of $e^{t\lap}u_0$ in an open set at a fixed time $t>0$.
(An early attempt to obtain Lemma~\ref{inj-lem} was made in a special case in \cite{Sho74}, but
it had flaws pointed out in \cite{ChJo18ax}.)

Injectivity of $e^{-tA}$ is also a crucial tool for the proof of the log-convexity in the present paper.
Indeed, the fact that $h(t)>0$ allows an application of the next result, that characterises the
log-convex $C^2$-functions as the solutions to a differential inequality: 

\begin{lem} \label{logconvex-lem}
If $f$ is continuous $[0,\infty[\,\to \R_+$ and $C^2$ for $t>0$,
the following properties are equivalent:
\begin{Rmlist}
  \item For $0<t<\infty$ it holds true that
  \begin{equation}
                                      f'(t)^2\le f(t)f''(t).
  \end{equation}
  \item $f(t)$ is log-convex on the open half\-line $\,]0,\infty[\,$, that is,
  \begin{equation}
  f(s) \le f(r)^{\frac{t-s}{t-r}}  f(t)^{\frac{s-r}{t-r}} \quad\text{ for $0< r<s<t<\infty$}.
\end{equation}
\end{Rmlist}
In the affirmative case $f(t)$ is log-convex also on the closed half\-line $[0,\infty[\,$.
\end{lem}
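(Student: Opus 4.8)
The plan is to strip away any operator theory and reduce the statement to the classical correspondence between a strictly positive function being log-convex and its logarithm being convex. Since $f$ is by hypothesis strictly positive on $[0,\infty[\,$, the function $g(t)=\log f(t)$ is well defined there, inherits continuity on $[0,\infty[\,$ from $f$, and is $C^{2}$ on $\,]0,\infty[\,$ with
\[
  g'(t)=\frac{f'(t)}{f(t)},\qquad
  g''(t)=\frac{f(t)f''(t)-f'(t)^{2}}{f(t)^{2}}.
\]
Thus $g''(t)\ge 0$ for all $t>0$ is, after multiplication by $f(t)^{2}>0$, exactly the inequality in \textrm{(i)}.

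Next I would invoke the standard second-derivative test: for a $C^{2}$ function on an open interval, $g''\ge0$ there is equivalent to $g$ being convex there (through $g'$ being nondecreasing). Finally I would rewrite convexity of $g$ as \textrm{(ii)}: for $0<r<s<t$ set $\theta=(s-r)/(t-r)$, so $s=(1-\theta)r+\theta t$ and $1-\theta=(t-s)/(t-r)$; then convexity of $g$ reads $g(s)\le(1-\theta)g(r)+\theta g(t)$, and exponentiating gives precisely
\[
  f(s)\le f(r)^{\frac{t-s}{t-r}}f(t)^{\frac{s-r}{t-r}}.
\]
This closes the loop \textrm{(i)}\,$\Leftrightarrow$\,\textrm{(ii)}, the two displayed parametrisations of log-convexity being interchangeable as already noted in the text.

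For the closing assertion I would assume the equivalent conditions hold and extend convexity of $g$ from $\,]0,\infty[\,$ to $[0,\infty[\,$ by a limiting argument using only continuity of $g$ at $0$, so that no derivative of $g$ at the endpoint is needed. Fix $0<s<t$; for $0<\varepsilon<s$ write $s$ as the convex combination of $\varepsilon$ and $t$ with weight $\theta_{\varepsilon}=(s-\varepsilon)/(t-\varepsilon)$, note $\theta_{\varepsilon}\to s/t$ as $\varepsilon\to0^{+}$, and apply convexity on $\,]0,\infty[\,$ to obtain $g(s)\le(1-\theta_{\varepsilon})g(\varepsilon)+\theta_{\varepsilon}g(t)$. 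Letting $\varepsilon\to0^{+}$ and using $g(\varepsilon)\to g(0)$ yields $g(s)\le\frac{t-s}{t}g(0)+\frac{s}{t}g(t)$, i.e.\ \textrm{(ii)} with $r=0$; exponentiating gives log-convexity on $[0,\infty[\,$.

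The proof is almost entirely mechanical; the one point calling for a little care is this passage to the endpoint $t=0$, where the argument must be made inside the convexity inequality rather than via $g'(0)$, since $f$ is assumed merely continuous — not differentiable — at $0$. Everything else is the textbook identification of log-convex $f$, convex $\log f$, and $(\log f)''\ge0$.
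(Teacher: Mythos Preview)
Your proof is correct and follows essentially the same route as the paper: define $g=\log f$, compute $g''=(ff''-(f')^{2})/f^{2}$ to identify (I) with $g''\ge0$, invoke the second-derivative criterion for convexity to obtain (II), and then extend to $r=0$ by a limiting argument using only continuity at the endpoint. The paper carries out the last step by letting $r\to0^{+}$ directly in the inequality of (II), which is exactly your $\varepsilon\to0^{+}$ argument in different notation.
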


\begin{rem}
   It is classical that a $C^2$-function $f$ is convex if and only if 
  $f''\ge0$. This positivity is fulfilled if $f$ satisfies
  (I), as $(f')^2\ge0$ and $f(t)>0$ is assumed; and it is so in qualified way, equivalent to
  log-convexity by Lemma~\ref{logconvex-lem}. 
  Though the lemma is not mentioned, convexity is amply elucidated in \cite{NiPe06}.
\end{rem}

\begin{proof}
By the assumptions $F(t)=\log f(t)$ is defined for $t\ge0$ and $C^2$
for $t>0$, as the Chain Rule gives 
\begin{equation}
  F''(t)=\left (\frac{f'(t)}{f(t)}\right)'=\frac{f''(t)f(t)-f'(t)^2}{f(t)^2}.
\end{equation}
Hence (I) is equivalent to $F''(t)\ge 0$ for $t>0$, which is
the criterion for the $C^2$-function $F$ to be convex for $t>0$;
which is a paraphase of the condition (II) for log-convexity of the positive function
$f(t)$ for $t>0$.  

By letting $r\to 0^+$ for fixed $s<t$, it follows from the continuity of $f(r)$ and 
of $\exp({\frac{t-s}{t-r}}\log f(r))$, that the
inequality in (II) is valid for $0\le r<s<t$. Consequently $f$ is log-convex on the closed
half\-line $[0,\infty[\,$. 
\end{proof}

To shed light on the lemma's consequences for height functions, one may conveniently use 
differential calculus in Banach spaces as exposed e.g.\ by H{\"o}rmander \cite[Ch.\ 1]{H} or Lang \cite{Lan}. Note, though, that the
inner product on $H$, despite its sesquilinearity, is differentiable on the induced real vector space $H_\R$, or 
rather on $H_R\oplus H_\R$,
with derivative $\ip{\cdot}{y}+\ip{x}{\cdot}$ at $(x,y)\in H_\R\oplus H_\R$. 
Since $u(t)=e^{-tA}u_0$ is non-zero for all $u_0\ne0$ by injectivity of the semigroup, cf.\ Lemma~\ref{inj-lem},  there is a  composite 
map between open sets $\R_+\to H_\R\oplus H_\R\to \R_+\to \R_+$  given by $t\mapsto \sqrt{\ip{u(t)}{u(t)}}$.
So the Chain Rule for real Banach spaces gives, since $u'=-Au$ for $t>0$, that
\begin{equation} \label{h'-id}
  h'(t)= \frac{\ip{u'}{u}+\ip{u}{u'}}{2\sqrt{\ip{u}{u}}} = -\frac{\Re\ip{Au}{u}}{|u|};
\end{equation}
and hence, since $u''=(e^{-tA}u_0)''=A^2e^{-tA}u_0=A^2u$,
\begin{equation}  \label{h''-id}
  h''(t)= \frac{\ip{A^2u}{u}+2\ip{Au}{Au}+\ip{u}{A^2u}}{2|u|}-\frac{(\Re\ip{Au}{u})^2}{|u|^3}.
\end{equation}
The differential inequality in (I) of Lemma~\ref{logconvex-lem}, 
\begin{equation} \label{h-ineq}
  (h'(t))^2\le h''(t) h(t),  
\end{equation}
is therefore equivalent to
\begin{equation} 
  \frac{(\Re\ip{Au}{u})^2}{|u|^2}\le   \Re\ip{A^2u}{u}+\ip{Au}{Au} -\frac{(\Re\ip{Au}{u})^2}{|u|^2};
\end{equation}
and to
\begin{equation} \label{h-ineq'}
  2(\Re\ip{Au}{u})^2\le   \Big(\Re\ip{A^2u}{u}+|Au|^2\Big)|u|^2.
\end{equation}
Obviously this condition is fulfilled for every $t>0$ when $A$ satisfies condition \eqref{A-cond}
above, for $u(t)=e^{-tA}u_0$ belongs to the subspace $D(A^n)\subset D(A^2)$ for every $n\ge2$, and all $u_0\in H$, when the semigroup
is holomorphic.
So in this case, it follows from Lemma~\ref{logconvex-lem} that $h(t)=|e^{-tA}u_0|$ is log-convex
for $t\ge0$, for  the continuity of $h(t)$ and of its derivatives given above entail that the
$C^2$-condition is fulfilled.

Conversely, in case the height function $h(t)$ is known to be log-convex for all $u_0\ne0$, then the generator $-A$
necessarily fulfils condition \eqref{A-cond} above. Indeed, in view of the equivalence of
\eqref{h-ineq} and \eqref{h-ineq'}, the former of these holds by the log-convexity of $h$, and so
does the latter. Especially it is seen by insertion of an arbitrary $u_0\in D(A^2)$ in
\eqref{h-ineq'} and commutation of $A$ and $A^2$ with the semigroup that
\begin{equation} 
  2(\Re\ip{e^{-tA}Au_0}{e^{-tA}u_0})^2\le   \Big(\Re\ip{e^{-tA}A^2u_0}{e^{-tA}u_0}+|e^{-tA}Au_0|^2\Big)|e^{-tA}u_0|^2.
\end{equation}
By passing to the limit for $t\to 0^+$ it follows for reasons of continuity that
\begin{equation} 
  2(\Re\ip{Au_0}{u_0})^2\le   \Big(\Re\ip{A^2u_0}{u_0}+|Au_0|^2\Big)|u_0|^2.
\end{equation}
Hence a normalisation to $x=\frac1{|u_0|}u_0$ yields \eqref{A-cond} for every unit vector 
$x$ in $D(A^2)$. Altogether this shows that \eqref{A-cond} characterises the generators $-A$ of
uniformly bounded, holomorphic semigroups having log-convex height functions for all non-trivial initial data.

The log-convexity criterion \eqref{A-cond} should be compared to the sufficient condition $h''(t)>0$ for strict
convexity. The latter is seen at once from the above arguments to be equivalent to the property 
\begin{equation} \label{A-strictly}
  (\Re\ip{Ax}{x})^2<   \Re\ip{A^2x}{x}+|Ax|^2, \qquad\text{ for $x\in D(A^2)$, $|x|=1$},
\end{equation}
where in comparison to \eqref{A-cond} the inequality is strict and a factor of 2 is absent on the left-hand side.

This clearly indicates that log-convexity is \emph{stronger} than strict convexity for non-constant functions:

\begin{lem} \label{logcon-lem}
  When $f\colon I\to [0,\infty[\,$ is log-convex on an interval or half\-line $I\subset\R$, then
  $f$ is convex---and if $f$ is not constant in any subinterval, then $f$ is \emph{strictly} convex
  on $I$.
\end{lem}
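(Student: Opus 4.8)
The plan is to derive both assertions from two elementary inequalities: the defining log‑convexity inequality \eqref{f-logcon}, namely $f((1-\theta)r+\theta t)\le f(r)^{1-\theta}f(t)^{\theta}$ for $r\le t$ in $I$ and $0<\theta<1$, and the weighted arithmetic--geometric mean inequality $a^{1-\theta}b^{\theta}\le(1-\theta)a+\theta b$, valid for all $a,b\ge 0$, with equality precisely when $a=b$. Chaining these for fixed $r<t$ and $s=(1-\theta)r+\theta t$ gives at once
\[
 f(s)\le f(r)^{1-\theta}f(t)^{\theta}\le(1-\theta)f(r)+\theta f(t),
\]
which is exactly convexity of $f$ on $I$; note that no case distinction between $f>0$ and $f$ having zeros is needed here, the arithmetic--geometric mean step being valid for nonnegative arguments.

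For the sharpened conclusion I would argue by contradiction. Assume $f$ is not constant on any subinterval but fails to be strictly convex, so that $f(s)=(1-\theta)f(r)+\theta f(t)$ for some $r<t$ in $I$, some $0<\theta<1$, and $s=(1-\theta)r+\theta t$. Then both inequalities in the displayed chain become equalities; equality in the arithmetic--geometric mean step forces $f(r)=f(t)=:c$, and hence also $f(s)=c$. The affine chord of the (now established) convex function $f$ over $[r,t]$ is thus the constant function $c$, and $f$ meets this chord at the interior point $s$. By the classical rigidity fact that a convex function touching its chord at an interior point of a segment coincides with the chord on all of that segment, $f\equiv c$ on $[r,t]$; so $f$ is constant on the nondegenerate subinterval $[r,t]\subseteq I$, contrary to hypothesis. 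Therefore $f$ is strictly convex on $I$.

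I do not foresee a real obstacle. The only non‑routine ingredient is the quoted rigidity property of convex functions, which is standard---see e.g.\ \cite{NiPe06}---and, if desired, follows in two lines directly from the convexity inequality applied on $[r,s]$ and on $[s,t]$; it would simply be recalled at the point of use.
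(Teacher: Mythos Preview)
Your argument is correct and essentially the same as the paper's: both chain the log-convexity inequality with Young's (weighted AM--GM) inequality to obtain convexity, and both use the equality case of the latter to force $f(r)=f(t)$. The only cosmetic difference is that the paper then argues directly---picking $u\in(r,t)$ with $f(u)\ne f(r)$ and verifying $f(s)<f(r)=f(t)$ for all $s\in(r,t)$---whereas you package the same step as a contradiction via the standard chord-touching rigidity of convex functions.
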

\begin{proof}
  Convexity on $I$ follows from \eqref{f-logcon} and Young's inequality for the dual exponents $1/\theta$ and
  $1/(1-\theta)$:
  \begin{equation}
    f((1-\theta)r+\theta t)\le f(r)^{1-\theta}f(t)^\theta \le (1-\theta)f(r)+\theta f(t).
  \end{equation}

In case $f(r)\ne f(t)$, then the last inequality is strict, as equality holds in Young's inequality if
and only if the numerators are identical (cf.\ \cite[p.\ 14]{NiPe06}). This yields the inequality of strict convexity in this
case. 

If there is a common value $C=f(r)=f(t)$ for some $r<t$ in $I$, there is by assumption a
$u\in\,]r,t[\,$ so that $f(u)\ne f(r)$, and because of the convexity of $f$ this entails
that $f(u)<f(r)=f(t)$: when $r<s\le u$ one may write $s=(1-\theta)r+\theta u$ and $s=(1-\omega)r+\omega t$ for suitable
$\theta,\omega\in\,]0,1[\,$, so clearly 
\begin{equation}
  \begin{split}
  f(s)&\le(1-\theta)f(r)+\theta f(u)
\\
      &<(1-\theta)f(r)+\theta f(t) = C= (1-\omega)f(r)+\omega f(t);  
  \end{split}  
\end{equation}
similarly for $u\le s<t$; so $f$ is strictly convex.
\end{proof}

For completeness it is noted that for example $f(t)=e^{t}-1$ is convex, but not log-convex as
$(\log f)''<0$. However, when $f\colon I\to \,]0,\infty[\,$ is log-convex, so is the stretched function
defined for $a<b$ in $I$ as
\begin{equation}
  f_{a,b}(t)=
  \begin{cases}
    f(t)\quad\text{for $t<a$}, \\ f(a)\quad \text{for $a\le t<b$}, \\ f(t-b)\quad\text{for $b\le t$}.
  \end{cases}
\end{equation}
This follows from the geometrically obvious fact that the convexity of $\log f$ survives the
stretching. Since $f_{a,b}$ clearly is not strictly convex, the last assumption of
Lemma~\ref{logcon-lem} is necessary.

When $A$ does satisfy condition \eqref{A-cond}, so that $h(t)$ is log-convex on $[0,\infty[\,$ for
every $u_0\ne0$ (cf.\ the last part of Lemma~\ref{logconvex-lem}), 
then $h(t)$ is necessarily strictly decreasing on $[0,\infty[\,$: the decay estimate $h(t)\le
Ce^{-t\eta} $ and the mere convexity statement in Lemma~\ref{logcon-lem} show that $h$ then 
satisfies the assumptions in the following self-suggesting

\begin{lem} \label{decay-lem}
  If $f\colon [0,\infty[\,\to \R_+$ is convex and $f(t)\to 0$ for
  $t\to\infty$, then $f$ is \emph{strictly} monotone decreasing.
\end{lem}
\begin{proof}
  Given $r<s$ in $[0,\infty[\,$, then $0<f(t_0)<\frac12 f(r)$ holds for some $t_0>s$. Taking
  $\ell(t)=\alpha t+\beta$ so that $\ell(r)=f(r)$ and $\ell(t_0)=f(t_0)$, the fact 
  $\alpha<0$ and convexity on $[r,t_0]$ yield $f(s)\le \ell(s)<\ell(r)=f(r)$. 
\end{proof}

Consequently $h(t)=|e^{-tA}u_0|$ is strictly decreasing on $[0,\infty[\,$ (hence has $h'(t)<0$ for $t>0$).
Therefore $h$ attains a unique global maximum at $t=0$.
Moreover, as $h$ cannot be constant in any subinterval, $h$ is
a strictly convex function on $[0,\infty[\,$, according to Lemma~\ref{logcon-lem} and the log-convexity.

By the convexity of $h$ one has that  $h''(t)\ge0$ for $t>0$, so $h'(t)$ is increasing on $\,]0,\infty[\,$. 
Consequently $\lim_{t\to 0^+ }h'(t)=\inf_{t>0} h'$ exists and belongs to $[-\infty,0[\,$, as $h'<0$. 
By the Mean Value Theorem there is some $t'\in\,]0,t[\,$ so that
\begin{equation}  \label{h't'-id}
  (h(t)-h(0))/t=h'(t')<0.
\end{equation}
This implies that $h(t)$ is (extended) differentiable from the right at $t=0$, with $h'(0)=\inf h'$. 
Since the strong continuity and strict decrease of $h$ gives  $|e^{-tA}u_0|\nearrow 1$ for $t\to 0^+$, 
an application of \eqref{h'-id} yields
\begin{equation}
  h'(0)=\inf h'\le \limsup_{t\to0^+}h'(t)\le \limsup_{t\to0^+} (-m(A)|e^{-tA}u_0|) \le -m(A).
\end{equation}

In case $u_0\in D(A)$ one can exploit that $h'(0)=\lim_{t\to 0^+}h'(t)$ by commuting $A$ with the
semigroup in \eqref{h'-id}, which in the limit gives, because of the strong continuity at $t=0$ and the continuity of inner products, 
\begin{equation} \label{h'0-id}
  h'(0)=\lim_{t\to 0^+} -\Re\ip{e^{-tA}Au_0}{e^{-tA}u_0} 
         = -\Re\ip{Au_0}{u_0} \quad\text{ for $u_0\in D(A)$, $|u_0|=1$}.
\end{equation}
In addition, it is seen from this that $h'(0)$ is a real number for
$u_0\in D(A)$, so $h\in C^1([0,\infty[\,,\R)$ for such $u_0$. 
For general $u_0\in H$ it follows from the Chain Rule that $h\in C^\infty(\R_+,\R)$.

It is also noteworthy that criterion \eqref{A-cond} implies that $A$  is positively
accretive; cf.\ \eqref{C+-id}. Indeed, as $h'(0)<0$ was seen above, \eqref{h'0-id} gives
$\Re\ip{Au_0}{u_0}=-h'(0)>0$ whenever $|u_0|=1$ in $D(A)$; whence $\nu(A)\subset\C_+$.

The above discussion can now be summed up as the main result of this article:

\begin{thm} \label{logcon-thm}
  When $-A$ denotes a generator of a uniformly bounded, holomorphic $C_0$-semigroup $e^{-tA}$ in a
  complex Hilbert space $H$, then the following properties are equivalent:
  \begin{rmlist}
    \item For every $x\in D(A^2)$ with $|x|=1$,
      \begin{equation} \label{A-cond'}
        2\big(\Re\ip{Ax}{x}\big)^2\le \Re\ip{A^2x}{x}+|Ax|^2.
      \end{equation}
   \item The height function $h(t)=|e^{-tA}u_0|$ is log-convex on $[0,\infty[\,$ for every
     $u_0\ne0$; that is, whenever $0\le r<s<t$,
\begin{equation}  \label{h-logcon''}
  \big|e^{-sA}u_0\big| \le \big|e^{-rA}u_0\big|^{\frac{t-s}{t-r}}  \big|e^{-tA}u_0\big|^{\frac{s-r}{t-r}}.
\end{equation}
  \end{rmlist}
In the affirmative case,  the height function $h(t)$ is for $u_0\ne0$ moreover strictly decreasing  
(hence strictly convex) on the closed half\-line
$[0,\infty[\,$ and differentiable from the right at $t=0$, with a derivative in $[-\infty,0[\,$, 
which satisfies
\begin{align}  \label{h'mA-ineq}
  h'(0)&=\inf_{t>0} h'(t)\le - m(A)  \qquad \text{for $|u_0|=1$};
\\
\intertext{and if $u_0\in D(A)$ with $|u_0|=1$, then}
  h'(0)&=-\Re\ip{Au_0}{u_0}
 \label{h'-eq}
\\
  h&\in C^1([0,\infty[\,,\R)\bigcap C^\infty(\R_+,\R)
\end{align}
Furthermore, when $A$ has the properties \upn{(i)} and \upn{(ii)}, then $A$ is positively
accretive, $\nu(A)\subset\C_+$.
\end{thm}

\begin{rem}
  If $A$ is strictly accretive, it is clear that \eqref{h'mA-ineq} is stronger than the
  property  $h'(0)\in[-\infty,0[\,$. Otherwise, when $A$ is merely positively accretive, then
  $h'(0)<0$ may be the stronger statement.
\end{rem}

Returning to the case of hyponormal generators considered in \cite{ChJo18ax}, it is first recalled that
a densely defined unbounded operator $A$ in $H$, following Janas~\cite{Jan94}, is said 
to be hyponormal if  
\begin{equation} \label{hypo-id}
  D(A)\subset D(A^*),\qquad |Ax|\ge |A^*x| \qquad\text{for all $x\in D(A)$}.
\end{equation}
Obviously this is fulfilled if $A^*=A$, but the hyponormal operators extend the
selfadjoint operators in another direction than symmetric ones do (as these have a full operator
inclusion $A\subset A^*$). Since clearly $A$ is normal if and only if both $A$ and $A^* $ are hyponormal,
this operator class is quite general. 

In case $A$ is a hyponormal operator in $H$, the inclusion $D(A)\subset D(A^*)$ gives at once
for $x\in D(A)$ that
\begin{equation}
  2\Re\ip{Ax}{x}=\ip{Ax}{x}+\ip{x}{Ax}=\ip{(A+A^*)x}{x}.
\end{equation}
Invoking also the norm inequality from the definition of hyponormality, a similar reasoning shows for $x\in D(A^2)$,
since $D(A^2)\subset D(A)\subset D(A^*)$, that
\begin{equation}
  |(A+A^*)x|^2= |Ax|^2+\ip{Ax}{A^*x}+\ip{A^*x}{Ax}+|A^*x|^2
  \le 2|Ax|^2+2\Re\ip{A^2x}{x}.
\end{equation}
Hence, by using the Cauchy--Schwarz inequality in the above identity, one finds
\begin{equation} \label{hypo-ineq}
  2(\Re\ip{Ax}{x})^2=\frac12\ip{(A+A^*)x}{x}^2
 \le \frac12|(A+A^*)x|^2|x|^2 
  \le \big(|Ax|^2+\Re\ip{A^2x}{x}\big)|x|^2.
\end{equation}
After a normalisation to $|x|=1$, this shows that a hyponormal operator always fulfils condition (i) in 
Theorem~\ref{logcon-thm}, cf.\ \eqref{A-cond'}.
Therefore one has the following generalisation of \cite{ChJo18ax} to the case of hyponormal non-variational generators:

\begin{cor} \label{hypo-cor}
 Let $-A$ generate a uniformly bounded holomorphic $C_0$-semigroup $e^{-tA}$ in a
  complex Hilbert space $H$. If $A$ is hyponormal, cf.\ \eqref{hypo-id}, then $A$ fulfils
  $\nu(A)\subset\C_+$ and the equivalent
  conditions \upn{(i)} and \upn{(ii)} in Theorem~\ref{logcon-thm}, and consequently the height function
  $|e^{-tA}u_0|$ has all the properties of log-convexity, strict convexity and strict decrease
  together with differentiablity at $t=0$ given in the theorem. 
\end{cor}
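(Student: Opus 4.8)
The plan is to obtain the Corollary as a direct consequence of Theorem~\ref{logcon-thm}. All the analytic content — log-convexity, strict decrease, strict convexity, one-sided differentiability at $t=0$ — is already packaged in that theorem, so the only thing that has to be checked is that hyponormality of $A$ implies its algebraic condition (i), i.e.\ the inequality \eqref{A-cond'}; after that the stated conclusions are read off verbatim, using in addition Lemma~\ref{logcon-lem} for the passage from log-convexity to strict convexity.

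To verify (i), I would fix an arbitrary $x\in D(A^2)$ and start from the inclusions $D(A^2)\subset D(A)\subset D(A^*)$, the last of which is part of the definition \eqref{hypo-id} of a hyponormal operator; these guarantee that all inner products below are meaningful and, by the defining property of the adjoint applied to $Ax\in D(A)$, that $\ip{A^2x}{x}=\ip{Ax}{A^*x}$. Using $D(A)\subset D(A^*)$ once more I would write $2\Re\ip{Ax}{x}=\ip{(A+A^*)x}{x}$, expand
\begin{equation}
  |(A+A^*)x|^2=|Ax|^2+|A^*x|^2+2\Re\ip{A^2x}{x},
\end{equation}
and bound the right-hand side by $2|Ax|^2+2\Re\ip{A^2x}{x}$ via the norm inequality $|A^*x|\le|Ax|$ of \eqref{hypo-id}. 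Combining this with the Cauchy--Schwarz bound $\big(\ip{(A+A^*)x}{x}\big)^2\le|(A+A^*)x|^2|x|^2$ and with the identity $4\big(\Re\ip{Ax}{x}\big)^2=\big(\ip{(A+A^*)x}{x}\big)^2$, and then normalising to $|x|=1$, gives exactly \eqref{A-cond'}. This is precisely the chain of estimates displayed in \eqref{hypo-ineq}.

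With (i) in hand, Theorem~\ref{logcon-thm} supplies the equivalence with (ii), so $h(t)=|e^{-tA}u_0|$ is log-convex on $[0,\infty[\,$ for every $u_0\ne0$. Since $A$ is moreover positively accretive, $\nu(A)\subset\C_+$, the final part of Theorem~\ref{logcon-thm} applies and yields the remaining properties directly: $h$ is strictly decreasing on $[0,\infty[\,$, hence strictly convex there by Lemma~\ref{logcon-lem} (as $h'<0$ prevents $h$ from being constant on any subinterval), $h$ is differentiable from the right at $t=0$ with $h'(0)=\inf_{t>0}h'(t)\le-m(A)$ when $|u_0|=1$, and furthermore $h'(0)=-\Re\ip{Au_0}{u_0}$ together with $h\in C^1([0,\infty[\,,\R)\bigcap C^\infty(\,]0,\infty[\,,\R)$ when $u_0\in D(A)$. (Uniform boundedness of the semigroup, needed to invoke the theorem, is part of the standing assumptions.)

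I do not anticipate any genuine obstacle: the whole argument reduces to one line of linear algebra in $H$, and the only point requiring attention is the unbounded-operator bookkeeping — checking that the inclusions $D(A^2)\subset D(A)\subset D(A^*)$ really do legitimise the identity $\ip{A^2x}{x}=\ip{Ax}{A^*x}$ and the expansion of $|(A+A^*)x|^2$. Once that is settled, condition (i) follows from a single use of Cauchy--Schwarz combined with the hyponormality inequality $|A^*x|\le|Ax|$, and nothing more is needed.
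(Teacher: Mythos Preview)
Your proposal is correct and follows essentially the same route as the paper: the paper's argument for the corollary is precisely the chain of estimates \eqref{hypo-ineq} preceding it, which you reproduce---the identity $2\Re\ip{Ax}{x}=\ip{(A+A^*)x}{x}$, the expansion and hyponormality bound for $|(A+A^*)x|^2$, and a single Cauchy--Schwarz application---followed by an appeal to Theorem~\ref{logcon-thm}. Your domain bookkeeping and the remark on uniform boundedness are appropriate; note also that positive accretivity already forces the semigroup to be contractive (hence uniformly bounded) via Lumer--Phillips, so that hypothesis is in fact automatic here.
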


However, true hyponormality only exists outside the realm of matrices and
Hilbert--Schmidt operators:

\begin{rem} \label{hyp-rem}
For $A\in \B(H)$ hyponormality means that $\ip{(A^*A-AA^*)x}{x}\ge0$ for all $x$, i.e.\
the commutator is positive, $[A^*,A]\ge0$. For such $A$ the trace
$\tr([A^*,A])$ is defined, and if $A$ is a Hilbert--Schmidt operator, so that
$A^*A$ and $AA^*$ are of trace class, $\tr([A^*,A])=\tr(A^*A)-\tr(AA^*)=0$.
Since $\|T\|\le \tr(T)$ when $T$ is positive, it follows that $[A^*,A]=0$ in
$\B(H)$. (Cf.\ \cite[Section 3.4]{GKP} for these facts.)  Hence every hyponormal Hilbert--Schmidt operator is normal.
Especially every hyponormal $n\times n$-matrix is normal.
\end{rem}

It is instructive to review condition \eqref{A-cond'} in case the accretive operator $A$ is variational:
that is, for some Hilbert space $V\subset H$ algebraically, topologically and densely and some
sesquilinear form $a\colon V\times V\to \C$, which is $V$-bounded and $V$-elliptic in the sense
that (with $\|\cdot\|$ denoting the norm in $V$) for some $C_0>0$
\begin{equation}
  \Re a(u,u)\ge C_0\|u\|^2 \qquad\text{for all $u\in V$},
\end{equation}
it holds for $A$ that $\ip{Au}{v}=a(u,v)$ for all
$u\in D(A)$ and  $v\in V$. This framework and Lax--Milgram's lemma on the properties of $A$ is
exposed in \cite[Ch.\ 12]{G09} and \cite[Ch.\ 3]{Hel13}.
It is classical that $-A$ generates a holomorphic semigroup $e^{-tA}$ in $\B(H)$; an explicit proof
is e.g.\ given in \cite[Lem.\ 4]{ChJo18ax}.

For such operators, $\ip{A^2u}{u}=a(Au,u)$ and $|Au|^2=a(u,Au)$ clearly hold for every $u\in D(A^2)$. So with the usual
convention for the ``real'' part, namely that $a_{\Re}(v,w)=\frac12(a(v,w)+\overline{a(w,v)})$ for $v,w\in V$, one has 
\begin{equation}
  \Re\ip{A^2u}{u}+|Au|^2=\Re a(Au,u)+\Re a(u,Au)= 2\Re \big(a_{\Re}(Au,u)\big).
\end{equation}
Thus the log-convexity criterion \eqref{A-cond'} can be stated for $V$-elliptic variational operators in the form of a
comparison of sesquilinear forms, 
\begin{equation}
  \big(\Re a(u,u)\big)^2\le \Re \big(a_{\Re}(Au,u)\big)\ip{u}{u}\qquad\text{ for $u\in D(A^2)$}.
\end{equation}

\begin{exmp}
  To see that variational operators need not be hyponormal, one may take $H=L_2(\alpha,\beta)$, with norm 
  $\|f\|_0=(\int_{\alpha}^{\beta} |f(x)|^2\,dx)^{1/2}$, for reals $\alpha<\beta$ and let 
  $V=\set{v\in H^1(\alpha,\beta)}{u(\alpha)=0}$ be a subspace of the first Sobolev space with norm
  given by $\|f\|_1^2=\int_{\alpha}^{\beta} (|f(x)|^2+|f'(x)|^2)\,dx$ and the  sequilinear form
  \begin{equation}
    a(u,v)=\int_{\alpha}^{\beta} u'(x)\overline{v'(x)}+u'(x)\overline{v(x)}\,dx.
  \end{equation}
 This is clearly $V$-bounded, and also $V$-elliptic: using partial integration and taking the mean
 of the two expressions for $a(u,v)$, one finds  $\Re a(u,u)=\|u'\|_0^2+\frac12|u(\beta)|^2$, so that
 $\Re a(u,u)\ge C_0\|u\|_1^2$ follows for all $u\in V$ and e.g.\ $C_0=\min(\frac12,(\beta-\alpha)^{-2})$ by ignoring
   the last term and applying the Poincar\'e inequality (it is known that a standard proof of this,
   as in e.g.\ \cite[Thm.\ 4.29]{G09}, applies to  the functions in $V$).

 The induced $A$ acts in the distribution space $\mathcal{D}'(\alpha,\beta)$ of Schwartz \cite{Swz66} as $Au=-u''+u'$,
 which is the advection-diffusion operator, having its domain given by a mixed Dirichlet and Neumann condition,
 \begin{equation}
   D(A)=\Set{u\in V}{u\in H^2(\alpha,\beta),\ u'(\beta)=0}=\Set{u\in H^2(\alpha,\beta)}{u(\alpha)=0,\ u'(\beta)=0}.
 \end{equation}
(The pure Dirichlet realisation of $A=-u''+u'$ has been studied at length; cf.\
Chapter 12 in the treatise of Embree and Trefethen \cite{EmTr05}, where use of pseudospectra is the
main tool.)

 Since $A^*$ is induced by the adjoint form $a^*(u,v)=\overline{a(v,u)}$, it is similarly seen that
 $A^*u=-u''-u'$, but here with the domain characterised by a mixed Dirichlet and Robin condition,
 \begin{equation}
   D(A^*)=\Set{u\in H^2(\alpha,\beta)}{u(\alpha)=0,\ u'(\beta)+u(\beta)=0}.
 \end{equation}
 As both $D(A)\setminus D(A^*)\ne \emptyset$ and $D(A^*)\setminus D(A)\ne\emptyset$, it follows from \eqref{hypo-id}
 that neither $A$ nor $A^*$ is hyponormal. This is part of the motivation for the introduction of
 the general condition \eqref{A-cond'} in this paper.
\end{exmp}

\section{Accretive squares}  \label{A2-sect}

The considerations in \cite{ChJo18,ChJo18ax} also dealt with variational operators $A$ that, instead of being hyponormal, have
accretive squares,
\begin{equation} \label{A2+-id}
  \nu(A^2)\subset\overline{\C}_+=\set{z\in\C}{\Re z\ge0}. 
\end{equation}
The discussion in Section~\ref{disc-sect} also extends to such operators without the assumption that $A$ is variational,
albeit only strict convexity is obtained for $h(t)$.

Indeed, when $m(A^2)\ge0$ holds, then it is seen from \eqref{h''-id} and Cauchy--Schwarz' inequality that
\begin{equation} \label{acc-sq}
  h''(t)=\frac{\Re\ip{A^2u}{u}|u|^{2}+|Au|^2|u|^{2}-(\Re\ip{Au}{u})^2}{|u|^{3}}
        \ge \frac{(|Au||u|)^2-(\Re\ip{Au}{u})^2}{|u|^{3}}
        \ge 0.
\end{equation}
Of course the mere convexity of $h$ for $t>0$ is implied by the above inequality $h''(t)\ge0$,
so
\begin{equation} \label{h-conv}
  h(s)\le \frac{t-s}{t-r}h(r)+\frac{s-r}{t-r}h(t) \qquad\text{ for $0<r<s<t$}.
\end{equation}
As $h$ is continuous on $[0,\infty[\,$, this extends to $0\le r<s<t$, so $h$ is convex on $[0,\infty[\,$.
Hence Lemma~\ref{decay-lem} also here applies to $h$, yielding its strict decrease.
The arguments below Lemma~\ref{decay-lem} then apply \emph{verbatim}, which leads to
differentiability at $t=0$ etc.\ of $h(t)$ (skipping the reference to 
Lemma~\ref{logcon-lem} here).
Moreover, this also yields that $A$ is positively accretive.

However, it remains to prove $h(t)$ strictly convex on $[0,\infty[\,$ when $A^2$ is
accretive (because of the factor 2 on the left-hand side of 
\eqref{A-cond'}, this condition is hardly implied by $m(A^2)\ge0$).
In the case $\nu(A^2)\subset\C_+$, clearly
the first inequality in \eqref{acc-sq} is strict, so that $h''(t)>0$ for $t>0$. Thus $h$
is strictly convex for such $A$. 

However, by inspection of the formula above, $h''(t)=0$ is seen to imply
that both $\Re\ip{A^2u}{u}\ge 0$ and $(|Au||u|)^2-(\Re\ip{Au}{u})^2\ge 0$ must hold with equality in the first numerator.
But then the inequalities 
\begin{equation}
  |\Re\ip{Au}{u}|\le |\ip{Au}{u} | \le |Au||u|
\end{equation}
hold with equality. As Cauchy-Schwarz' inequality is an identity only for proportional vectors,
there is some $\lambda=\mu+\im \omega$, $\mu,\omega\in\R$, such that $Au(t)=\lambda u(t)$. Insertion
of this into the equation $h''(t)=0$ yields 
\begin{equation}
  \Re\lambda^2|u|^4+|\lambda|^2|u|^4-(\Re \lambda|u|^2)^2=0,
\end{equation}
which reduces to
\begin{equation}
  \mu^2=0.
\end{equation}
Hence $\lambda=\im\omega$ is an eigenvalue of $A$, as $u(t)=e^{-tA}u_0\ne0$ in view of the
restriction to $u_0\ne0$ and injectivity of $e^{-tA}$; cf.\ Lemma~\ref{inj-lem}. But it was seen
above that $A$ is positively accretive, so it cannot have any eigenvalues on $\im\R$. Consequently $h''(t)>0$ holds for all $t>0$, 
so $h(t)$ is strictly convex for $t>0$.

To extend the strict convexity to the closed half\-line where $t\ge0$, one may conveniently take
recourse to the slope function $S(r,t)=(h(t)-h(r))/(t-r)$. 
Because of the Mean Value Theorem and the strict increase of $h'$, this satisfies $S(r,s)< S(s,t)$
whenever $0<r<s<t$; which is a classical criterion for strict convexity of $h$ on $\,]0,\infty[\,$.
But this sharp inequality extends to the case $r=0$, for by introducing some $r'$ such that $r=0<r'<s<t$, one
finds from the convexity of $h$ on $[0,\infty[\,$ obtained after \eqref{h-conv} that
\begin{equation}
  S(0,s)\le S(r',s)<S(s,t).
\end{equation}
Indeed, the first of these inequalities is valid since the slope function $S(s,t)$ is monotone
increasing in both arguments separately for every convex function on an interval. 
Hence $h$ is strictly convex on $[0,\infty[\,$.

Altogether this proves a result analogous to Theorem~\ref{logcon-thm}, but not quite as strong as
this, for operators $A$ with accretive squares:

\begin{prop}  \label{A2-prop}
  If $-A$ generates a uniformly bounded, holomorphic $C_0$-semigroup $e^{-tA}$ in a
  complex Hilbert space $H$ and $A$ has an accretive square,
  that is 
  \begin{equation}
    \nu(A^2)\subset\overline{\C}_+,   
  \end{equation}
  then if $u_0\ne0$ the height function 
  $h(t)=|e^{-tA}u_0|$ is strictly convex and strictly decreasing on $[0,\infty[\,$, even with
  $h''>0$ for $t>0$,
  and it is differentiable from the right at $t=0$, with a derivative in $[-\infty,0[\,$ satisfying
\begin{equation}
  h'(0)=\inf_{t>0} h'(t)\le - m(A) \qquad \text{for $|u_0|=1$};
\end{equation}
and if $u_0\in D(A)$ with $|u_0|=1$ it holds that $  h'(0)=-\Re\ip{Au_0}{u_0}$
and $h\in C^1([0,\infty[\,,\R)\bigcap C^\infty(\,]0,\infty[\,,\R)$.
Furthermore, $A$ is then positively accretive, that is, $\nu(A)\subset\C_+$.
\end{prop}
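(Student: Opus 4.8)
The plan is to exploit the formulas \eqref{h'-id} and \eqref{h''-id} for $h'$ and $h''$, which are available for $t>0$ because the semigroup is holomorphic (so $u(t)=e^{-tA}u_0\in D(A^n)$ for all $n$) and injective (so $h(t)=|u(t)|>0$ by Lemma~\ref{inj-lem}). First I would establish $h''(t)\ge0$ for $t>0$ directly from \eqref{h''-id} and the Cauchy--Schwarz inequality: the hypothesis $\nu(A^2)\subset\overline{\C}_+$ makes $\Re\ip{A^2u}{u}\ge0$, leaving the nonnegative remainder $(|Au||u|)^2-(\Re\ip{Au}{u})^2$; this is exactly the estimate \eqref{acc-sq}. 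It gives convexity of $h$ on $\,]0,\infty[\,$, and then on $[0,\infty[\,$ by continuity of $h$ at the endpoint (strong continuity of the semigroup).

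The crux is to upgrade $h''\ge0$ to the strict inequality $h''(t)>0$ for $t>0$, which is what the assumption $\nu(A)\subset\C_+$ buys beyond mere accretivity of $A^2$. I would argue by contradiction: if $h''(t_0)=0$ at some $t_0>0$, then by \eqref{h''-id} necessarily $\Re\ip{A^2u}{u}=0$ and $|Au|^2|u|^2=(\Re\ip{Au}{u})^2$ at $t_0$, which forces equality throughout $|\Re\ip{Au}{u}|\le|\ip{Au}{u}|\le|Au||u|$; since Cauchy--Schwarz is sharp only for proportional vectors, $Au(t_0)=\lambda u(t_0)$ for some $\lambda=\mu+\im\omega\in\C$ with $\mu,\omega\in\R$. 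Feeding this back into $h''(t_0)=0$ collapses to $\mu^2=0$, so $\lambda=\im\omega$; but $u(t_0)\ne0$ by injectivity would then be an eigenvector with purely imaginary eigenvalue, contradicting $\nu(A)\subset\C_+$. Hence $h''>0$ on $\,]0,\infty[\,$, so $h$ is strictly convex there. To extend strict convexity to the closed half\-line I would use the slope function $S(r,t)=(h(t)-h(r))/(t-r)$, which on $\,]0,\infty[\,$ is increasing in each argument by the Mean Value Theorem and the strict monotonicity of $h'$; the sharp inequality $S(r',s)<S(s,t)$ then survives the passage $r'\to0^+$ because $S(0,s)\le S(r',s)$ already holds by the convexity of $h$ on $[0,\infty[\,$ noted above.

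The remaining assertions are immediate and run parallel to Section~\ref{disc-sect}: $h'(t)<0$ for $t>0$ follows from \eqref{h'-id} (equivalently \eqref{h'-ineq}) and $\nu(A)\subset\C_+$, and extends to $[0,\infty[\,$ via the Mean Value identity $(h(t)-h(0))/t=h'(t')$; since $h'$ is increasing, $h'(0):=\lim_{t\to0^+}h'(t)=\inf_{t>0}h'(t)$ exists in $[-\infty,0]$, and the same identity shows $h$ is differentiable from the right at $0$ with this value, which is $\le-m(A)$ by the definition of $m(A)$ and \eqref{h'-id} after letting $t\to0^+$. For $u_0\in D(A)$ one commutes $A$ past the semigroup in $h'(t)=-\Re\ip{e^{-tA}Au_0}{e^{-tA}u_0}$ and lets $t\to0^+$, using strong continuity and continuity of the inner product, to get $h'(0)=-\Re\ip{Au_0}{u_0}$; the $C^1$-regularity up to $0$ drops out of the same computation, while $h\in C^\infty(\,]0,\infty[\,,\R)$ comes from the Chain Rule.

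I expect the only genuinely delicate point to be the strict-positivity argument for $h''$, i.e.\ the equality-case analysis and the use of injectivity (Lemma~\ref{inj-lem}) to exclude imaginary eigenvalues; everything else is the same bookkeeping as for Theorem~\ref{logcon-thm}, with Lemma~\ref{logcon-lem} replaced by the direct estimate \eqref{acc-sq}.
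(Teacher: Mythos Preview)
Your proposal is correct and follows the paper's own argument essentially step for step: the estimate \eqref{acc-sq} for $h''\ge0$, the equality-case analysis leading to a purely imaginary eigenvalue (ruled out by $\nu(A)\subset\C_+$ and injectivity via Lemma~\ref{inj-lem}), the slope-function extension of strict convexity to $t=0$, and the verbatim reuse of the Section~\ref{disc-sect} arguments for strict decrease and differentiability at $t=0$. The only cosmetic slip is that your displayed formula for $h'(t)$ when $u_0\in D(A)$ omits the denominator $|e^{-tA}u_0|$, but since this tends to $1$ the limit $h'(0)=-\Re\ip{Au_0}{u_0}$ is unaffected.
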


Here $h''(t)>0$ was mentioned explicitly, as not all strictly convex functions 
fulfil this (cf.\ $t^4$), whereas in Theorem~\ref{logcon-thm} this property  was  straightforward
from the differential inequality characterising log-convexity. 

The last fact in Proposition~\ref{A2-prop} that $A$ is positively accretive can \emph{post festum} be much sharpened: 
for an accretive operator $A$ to have an accretive square, cf.\ \eqref{A2+-id}, it is \emph{necessary} that $A$ 
has semiangle $\delta\le\frac{\pi}4$, that is, $|\Im z|\le \Re z$ for every $z\in\nu(A)$. 
This was shown already by Showalter \cite[Lem.\ 3]{Sho74}, who 
gave the main lines in the proof of the following

\begin{lem} \label{A2-lem}
  If $A$ is an operator in $H$ so that $A$, $A^2$ are accretive and $\Re\mu<0$ holds for some $\mu$ in the resolvent set $\rho(A)$, then $|\arg z|\le\pi/4$ for all $z\in\nu(A)$.
\end{lem}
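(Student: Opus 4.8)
The plan is to argue by contradiction: suppose some $z_0 = \ip{Ax}{x} \in \nu(A)$ has $|\arg z_0| > \pi/4$, with $|x| = 1$, so that $|\Im z_0| > \Re z_0$. Writing $z_0 = p + \im q$ with $p, q \in \R$, this means $q^2 > p^2$. The idea is to test accretivity of $A^2$ not on $x$ itself but on a cleverly chosen unit vector in the two-dimensional (or smaller) subspace spanned by $x$ and $Ax$. Indeed, accretivity of $A$ gives us $\Re\ip{Ax}{x} = p \ge 0$, and we want to produce a vector $y$ with $\Re\ip{A^2 y}{y} < 0$, contradicting accretivity of $A^2$.

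First I would reduce to the essential finite-dimensional picture. Set $v = Ax$ and decompose $v = z_0 x + w$ where $w \perp x$ (so $\ip{w}{x} = 0$); write $r = |w| \ge 0$. Then for a scalar $t \in \R$ consider $y_t = x + t\,(\text{something})$ — more precisely, since we need $y_t \in D(A^2)$ and want to keep control, I would look at vectors of the form $y = \alpha x + \beta v$ with $\alpha, \beta \in \C$, normalised. The quantity $\ip{A^2 y}{y} = \ip{A v_y}{y}$ where $v_y = A y = \alpha v + \beta A v$, and $A v = A^2 x$ enters. This is where it gets delicate: we do not control $A^2 x$ directly. Showalter's trick (which the excerpt attributes to him) is presumably to avoid $A^2 x$ altogether by a homogeneity/scaling argument in the numerical range — using that $\nu(A)$ being contained in a sector is equivalent to $\Re(e^{\im\theta}\ip{Ax}{x}) \ge 0$ for a range of $\theta$, and similarly for $A^2$, and then combining these linearly.

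So the cleaner approach I would actually carry out: accretivity of $A$ means $\Re\ip{Ax}{x} \ge 0$ for all $x \in D(A)$, hence also $\Re\ip{A(x+sy)}{x+sy} \ge 0$ for all real $s$ and all $x, y \in D(A)$, which upon expanding and letting $s \to 0$ and $s \to \pm\infty$ yields the familiar polarization-type constraints. The real content is: for $x \in D(A^2)$, apply accretivity of $A^2$ to the vector $u_s = x - s\,Ax$ (valid since $Ax \in D(A)$ when $x \in D(A^2)$... one needs $Ax \in D(A^2)$, so better take $x \in D(A^3)$ or use a density argument — this is a technical point to watch). Expanding $\Re\ip{A^2 u_s}{u_s} \ge 0$ gives a quadratic in $s$:
\begin{equation}
  \Re\ip{A^2 x}{x} - 2s\,\Re\ip{A^2 x}{Ax}_{\text{sym}} + s^2 \Re\ip{A^2 Ax}{Ax} \ge 0.
\end{equation}
Simultaneously, accretivity of $A$ on $u_s = x - sAx$ gives
\begin{equation}
  \Re\ip{Ax}{x} - 2s\,\Re\ip{Ax}{Ax}_{\text{sym}} + s^2 \Re\ip{A^2 x}{Ax} \ge 0,
\end{equation}
i.e. $p - 2s|Ax|^2 + s^2 \Re\ip{A^2 x}{Ax} \ge 0$ for all $s \in \R$. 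The discriminant condition from this second inequality forces $\Re\ip{A^2 x}{Ax} \ge 0$ and $|Ax|^4 \le p\,\Re\ip{A^2 x}{Ax}$. Combining with the analogous discriminant condition extracted from the first inequality — after noting $\Re\ip{A^2 x}{Ax} = \Re\ip{A(Ax)}{Ax} \ge 0$ plays the role of a "new $p$" — one chains the estimates to bound $|q|$ in terms of $p$. The hard part will be organizing this chain so that the imaginary part $q = \Im\ip{Ax}{x}$ actually enters: one gets at it via $|Ax|^2 \ge |\ip{Ax}{x}|^2 = p^2 + q^2$, so that $p^2 + q^2 \le |Ax|^2 \le \sqrt{p\,\Re\ip{A^2 x}{Ax}}$, and then bounding $\Re\ip{A^2 x}{Ax}$ — itself the "$p$-quantity" for the vector $Ax$ — by a similar expression, iterating. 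Showing the iteration closes up to give exactly $q^2 \le p^2$ (the $\pi/4$ bound) rather than something weaker is the crux; I expect the right move is a scaling normalisation (replace $x$ by $\lambda x$, $\lambda > 0$, to put $|Ax| = 1$ or $p = |Ax|^2$) that makes the two quadratic inequalities combine into a single clean constraint. I would follow Showalter's cited argument for the precise bookkeeping.
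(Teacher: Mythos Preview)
Your proposal does not actually contain a proof: you explicitly defer the crucial step (``showing the iteration closes up to give exactly $q^2\le p^2$'') to ``Showalter's cited argument for the precise bookkeeping'', and the chain of discriminant inequalities you sketch does not, as written, force the sharp bound $|\Im z|\le\Re z$. There is also a computational slip: expanding $\Re\ip{A(x-sAx)}{x-sAx}$ gives the linear coefficient $-\big(|Ax|^2+\Re\ip{A^2x}{x}\big)$, not $-2|Ax|^2$, so the discriminant condition you write down is not the one that actually holds. Even with the correct quadratic, the discriminant yields $\big(|Ax|^2+\Re\ip{A^2x}{x}\big)^2\le 4p\,\Re\ip{A^2x}{Ax}$, and combined with $|Ax|^2\ge p^2+q^2$ this gives only $(p^2+q^2)^2\le 4p\,|Ax|^2\,\tilde p$ for $\tilde p=\Re\ip{A\tilde x}{\tilde x}$ with $\tilde x=Ax/|Ax|$; the iteration produces \emph{lower} bounds on successive quantities, not upper bounds, and there is no visible mechanism for it to telescope to $q^2\le p^2$. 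Each further step also requires $x\in D(A^{k})$ for increasing $k$, a difficulty you note but do not resolve.

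The paper's argument is entirely different and avoids test vectors and iteration altogether. For bounded $B=X+\im Y$ with $X,Y$ selfadjoint, accretivity of $B^2$ is \emph{equivalent} to $|Yu|\le |Xu|$ for all $u$, i.e.\ $X^2\ge S^2$ where $S=|Y|=(Y^*Y)^{1/2}$. The key step is that operator monotonicity of the square root on positive operators then gives $X\ge S$; combined with the polar-decomposition estimate $|\ip{Yu}{u}|\le\ip{Su}{u}$, one obtains $|\ip{Yu}{u}|\le\ip{Xu}{u}$, which is precisely the sector condition $|\arg\ip{Bu}{u}|\le\pi/4$. The passage to unbounded $A$ is done not by a density argument in $D(A^k)$ but by applying the bounded case to the resolvent $B=(A+\varepsilon I)^{-1}$, after a short computation showing that accretivity of $A$ and $A^2$ forces accretivity of $B$ and $B^2$. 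The ingredient you are missing is thus operator monotonicity of $t\mapsto\sqrt t$: this is what converts the pointwise norm inequality $|Yu|\le|Xu|$ into the quadratic-form inequality $\ip{Su}{u}\le\ip{Xu}{u}$ needed for the numerical-range conclusion, and no amount of scalar discriminant analysis substitutes for it.
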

\begin{proof}
  First the claim is proved for every bounded operator
  $B\in \B(H)$; here $B=X+\im Y$ for selfadjoint 
  $X$, $Y\in \B(H)$, as noted prior to \eqref{AXY-id}. Since $B$ is accretive,
  $\ip{Xu}{u}=\Re\ip{Bu}{u}\ge0$ holds for $u\in H$, as does
  \begin{equation}
    \Re\ip{B^2u}{u}\ge0 \iff \ip{(X^2-Y^2)u}{u}\ge0\iff  |Yu|^2 \le |Xu|^2  
  \end{equation}
   By the polar decomposition, $Y=US$ holds for a partial isometry $U$ and 
  $S=|Y|=(Y^*Y)^{1/2}$; the latter is positive and fulfils $|Yu|=|Su|$ and
  $|\ip{Yu}{u}|\le\ip{Su}{u}$ for all $u\in H$ [recall that as $S\ge0$, one has 
  $|\ip{Yu}{u}|^2=|\ip{Su}{U^*u}|^2\le \ip{Su}{u}\ip{SU^*u}{U^*u}=\ip{Su}{u}^2$, for
  $USU^*=|Y^*|=|Y|=S$ as $Y$ is selfadjoint, cf.\ \cite[3.2.19]{GKP}]. 
  Exploiting the fact $|Yu|=|Su|$ in the above,  
  $m(B^2)\ge0$ is seen to imply $X^2-S^2\ge0$, which by the well-known operator monotonicity of the square root on positive operators
  implies that $X\ge S$; cf.\ \cite[E3.2.13]{GKP}. When combined with the second fact on $S$, one finds $|\ip{Yu}{u}|\le
    \ip{Su}{u}\le \ip{Xu}{u}$, so that $z=\ip{Bu}{u}$ belongs to the closed sector
    $\overline{\Sigma}_{\pi/4}\subset\C$ given by $|\arg z|\le \pi/4$.

  For general accretive $A$, the assumption on $\rho(A)$ implies, since $\nu(A)\subset\overline{\C}_+$ that every 
  $\lambda$ having $\Re\lambda<0$ belongs to $\rho(A)$; cf.\ the proof of \cite[Thm.\ 3.9]{Paz83}. 
  Therefore the resolvent $B=(A+\varepsilon I)^{-1}$ is in $\B(H)$ for
  all $\varepsilon>0$, and for $v=B^2u\in D(A^2)$,
  \begin{equation}
    \Re\ip{B^2u}{u}=\Re\ip{(A+\varepsilon I)^2v}{v}=\varepsilon^2|v|^2+2\varepsilon \Re\ip{Av}{v}+\Re\ip{A^2v}{v}.
  \end{equation}
  Now, as $A$, $A^2$ are accretive, so is $B^2$ for $\varepsilon>0$;
  whilst $\ip{Bu}{u}=\varepsilon|v|^2+ \overline{\ip{Av}{v}}$ yields that $B$ is
  accretive. So by the above, $\ip{Bu}{u}\in\overline{\Sigma}_{\pi/4}$ for any $\varepsilon>0$; hence, by
  the formula, $\ip{Av}{v}$ must belong to $\overline{\Sigma}_{\pi/4}$ too.
\end{proof}

As motivation for stating Lemma~\ref{A2-lem} and giving a concise proof
(without the assumption, made in \cite{Sho74}, that $-A$ should
generate a $C_0$-semigroup), it should be mentioned that, contrary to the claim in \cite{Sho74}, having
semiangle $\delta\le\pi/4$ does \emph{not} suffice for $A^2$ to be accretive.

This inaccuracy was pointed out by means of the counter-example in
\cite[Rem.~9]{ChJo18ax}, which is slightly reformulated here for a better reading and in order to note
explicitly that the constructed operator gives rise to a contraction semigroup by the Lumer--Philips theorem, cf.\ 
\cite[Cor.\ 14.11]{G09} or \cite[Thm.\ 4.3]{Paz83}:

\begin{exmp} \label{Show-ex}
To obtain an operator $A$ so that $m(A^2)<0<m(A)$ and $\nu(A)\subset\overline{\Sigma}_{\pi/4}$, 
it suffices to take $A$ in $\B(H)$ if $\dim H\ge2$: 
As $A=X+\im Y$ for selfadjoint $X$, $Y\in\B(H)$,  cf.\ \eqref{AXY-id},
clearly $m(A)=m(X)$. Here $X$ can just be chosen to have 
two orthonormal eigenvectors $v_1$, $v_2$ with eigenvalues $\lambda_2>\lambda_1>0$ and $X=I$ on
$H\ominus\Span(v_1,v_2)$, if this is non-trivial. Then $m(X)=\min(1,\lambda_1)>0$. 
Obviously $\nu(A)\subset\overline{\Sigma}_{\pi/4}$ means that
$|\ip{Yv}{v}|\le\ip{Xv}{v}$ for all $v\in H$, or that  $-X\le Y\le X$.
This is achieved for $Y=\delta X+\lambda_1 U$ if $\delta>0$ is small enough and $U$ is a
partial isometry interchanging $v_1$ and $v_2$; with $U=0$ on $H\ominus\Span(v_1,v_2)$.  
In fact, writing $v=c_1v_1+c_2v_2+v_\perp$ for $v_\perp\in
H\ominus\Span(v_1,v_2)$, since $v_1\perp v_2$, the inequalities for $Y$ are equivalent to 
$2\lambda_1|\Re(c_1\bar c_2)|\le \lambda_1(1-\delta)|c_1|^2+(1-\delta)\lambda_2|c_2|^2
+(1-\delta)|v_\perp|^2$, which by Young's inequality is
assured if $1/(1-\delta)\le (1-\delta)\frac{\lambda_2}{\lambda_1}$, that is if $0<\delta\le
1-\sqrt{\lambda_1/\lambda_2}$. 
Now, $m(A^2)\ge0$ means that $|Xv|^2\ge |Yv|^2$ for all $v$ in $H$, but 
this is \emph{always} violated, as one can see from 
$|Yv|^2= \delta^2|Xv|^2+\lambda_1^2|Uv|^2+2\delta\lambda_1\Re\ip{Xv}{Uv}$ by inserting $v=v_1$, 
whereby the last term vanishes as $v_1\perp v_2=Uv_1$: this leads to
\begin{equation}
  |Xv_1|^2-|Yv_1|^2
  = \lambda_1^2|v_1|^2-(\delta^2\lambda_1^2|v_1|^2+\lambda_1^2|v_2|^2)= -\delta^2\lambda_1^2<0 .
\end{equation}
Specifically the symmetric, but non-normal matrices $    A= 
   \left(\begin{smallmatrix}\lambda&0\\0&4\lambda\end{smallmatrix}\right) 
  + \im\lambda\left(\begin{smallmatrix}\delta& 1\\ 1 & 4\delta\end{smallmatrix}\right)$
are counter-examples in $\B(\C^2)$ for $\lambda>0$, $0<\delta\le 1/2$.
\end{exmp}

\begin{rem} \label{Aneg-rem}
The bounded operator $A$ in Example~\ref{Show-ex} is also useful in relation to
hyponomality and the log-convexity criterion \eqref{A-cond'} in Theorem~\ref{logcon-thm}. 
Here it is shown explicitly that it does not have these properties (for hyponormality this also follows 
from Remark~\ref{hyp-rem}).
The notation from Example~\ref{Show-ex} is continued here.

First $A=X+\im Y=(1+\im\delta)X+\im \lambda_1 U$ entails 
$[A^*,A]=2\im[X,Y]=2\lambda_1\im[X,U]$, so  $w=c_1v_1+c_2v_2$ gives $[A^*,A]w=2\im\lambda_1((\lambda_1-\lambda_2)c_2v_1+(\lambda_2-\lambda_1)c_1v_2)$. 
Hence $[A^*,A]\ne0$, so $A$ is non-normal in $\B(H)$. 

Now, 
$\ip{[A^*,A]w}{w}=4\lambda_1(\lambda_1-\lambda_2)\Im(c_1\bar c_2)$,
and inserting $c_1=\pm \im c_2=2^{-1/2}$ yields  that $\nu([A^*,A])$ contains $\pm 2\lambda_1(\lambda_2-\lambda_1)\ne0$;
hence $[A^*,A]$, $[A,A^*]$ are non-positive. So neither $A$ nor $A^*$ is hyponormal.

Furthermore, for $A$ the criterion for bounded operators in \eqref{A-cond''} is that, for $v\in H$, 
  \begin{equation} 
        \ip{Xv}{v}^2\le (|Xv||v|)^2 +\Im\ip{Xv}{\delta Xv +\lambda_1Uv}|v|^2
        = (|Xv||v|)^2 +\lambda_1\Im\ip{Xv}{Uv}|v|^2.
  \end{equation}
Here it is obvious that $\delta$ is absent in the criterion. To show that the inequality is violated for any choice
of $\lambda_2>\lambda_1>0$ it suffices to insert vectors of the form $v=\im sv_1+v_2$ for $s>0$. 
Indeed, $|v|^2=s^2+1$ due to the orthogonality, and $Xv=\im s\lambda_1v_1+\lambda_2v_2$ 
while $Uv=v_1+\im sv_2$, so the above gives for this $v$,
\begin{equation}
  (s^2\lambda_1+\lambda_2)^2\le  (s^2+1)(s^2\lambda_1^2+\lambda_2^2+s\lambda_1(\lambda_1-\lambda_2)).
\end{equation}
 As the fourth order  term $\lambda_1^2s^4$ cancels on both sides, the term of highest
 degree is $s^3\lambda_1(\lambda_1-\lambda_2)$ on the right-hand
 side. After division by $s^3$ and passage to the limit $s\to\infty$, one therefore  arrives at the
 false statement  ``$0\le \lambda_1(\lambda_1-\lambda_2)$''.
Consequently the operator $A$ from Example~\ref{Show-ex} does not fulfil the log-convexity criterion 
in Theorem~\ref{logcon-thm} for any of the considered values of the parameters. Especially this is
so for the matrix given at the end of Example~\ref{Show-ex}. 
\end{rem}

\section{Final remarks}

\begin{rem} \label{A2-rem}
When $\nu(A^2)\subset\overline{\C}_+$ as in Section~\ref{A2-sect}, it is also illuminating to observe the possibility to depart from
the cleaner expression of the derivatives of the squared height $h(t)^2=|e^{-tA}u_0|^2$:
\begin{equation} \label{h2''-id}
  (h^2)''(t)=(-2\Re\ip{Au}{u})'= 2\Re\ip{A^2u}{u}+2|Au|^2\ge |Ae^{-tA}u_0|^2.
\end{equation}
Here $e^{-tA}$ is injective, and $A$ is so as $\nu(A)\subset\C_+$, whence $u_0\ne0$ yields
$(h^2)''>0$.
Similarly $(h^2)''>0$ can be seen from \eqref{hypo-ineq} to hold if $A$ is hyponormal.
That is, $h^2$ is in both cases strictly convex for $t>0$.

But as $\sqrt\cdot$ is concave (not convex), strict convexity of
$h^2$ is not simply carried over to $h$.
As the task is to prove $h'$ strictly increasing, the formula $h'=(h^2)'/(2\sqrt{h^2})$ looks
convincing as there is strict decrease of the denominator  while the numerator is increasing\,---\,but 
this formula does not lead to the desired conclusion because $(h^2)'<0$. 

This small point was overlooked in \cite[Prop.~4]{ChJo18ax},
yet the statement given there is nevertheless correct. 
Indeed, \cite[Prop.~4]{ChJo18ax} is generalised
to non-variational $A$ having accretive squares in Proposition~\ref{A2-prop}, 
and to non-variational hyponormal generators $A$ in Corollary~\ref{hypo-cor}. A further generalisation to
generators $A$ satisfying the log-convexity criterion \eqref{A-cond'} is provided by Theorem~\ref{logcon-thm}.
\end{rem}

\begin{rem}
  For matrices $A$ in $\B(\Cn)$ the dynamical properties of \eqref{ivp-id} have been studied for decades, and e.g.\
  Perko \cite[Ch.\ 1]{Per01} gave a concise treatment with many explicit formulas for
  the exponential matrix $e^{-tA}$ and the resulting solution $u(t)$. However, most systems have
  eigenvalues that are complicated or even impossible to write down ($n\ge 5$), and this led Moler and Van Loan
  to review the possibilities in 1978 in ``Nineteen dubious ways to calculate the exponential of a matrix'',
  with an update in 2003 \cite{MoVL03}.

  The present results are closer in spirit to more recent work, a glimpse of which is given
  here, following the inspiring exposition of  Embree and Trefethen \cite[Ch.\ 14]{EmTr05}. 
  A major subject of interest has been the behaviour
  of the operator norm $E(t)=\|e^{-tA}\|$, which has the advantage of being independent of any
  initial data $u_0$, thereby letting the influence of especially non-normal matrices shine
  though. At $t=0$ it is a main result that the numerical range $\nu(A)$ controls the growth rate of $E$,
  \begin{equation} \label{E'-id}
    E'(0)=-m(A).
  \end{equation}
  For $t\to\infty$ it
  is known that $\frac1t\log E(t)\to -\underline\sigma$, where again $\underline\sigma=\inf\Re\sigma(A)$ denotes the
  spectral abscissa of $A$; so the long-term behaviour is controlled by 
  $\sigma(A)$. For the transition phase there is the pseudospectral estimate $\sup_{t\ge0}E(t)\ge
  \alpha_\varepsilon(-A)/\varepsilon$,
  supplied with estimates from below of $\sup_{0\le s\le t}E(s)$ that permit an exploration of the
  time $t_0$ at which $\sup_{t\ge0}E(t)$ is attained.

  However, when $u_0$ is reintroduced, the inequality $|e^{-tA}u_0|\le E(t)|u_0|$
  is a crude estimate (a worst-case scenario), which does not suffice to settle whether 
  $h(t)=|e^{-tA}u_0|$ has properties like strict decrease or
  log-convexity as in Theorem~\ref{logcon-thm}. Moreover, 
  \eqref{E'-id} is often highly misleading for the short-time behaviour of $h(t)$ itself. For
  example, for $A=\left(\begin{smallmatrix}-1&0\\0&3\end{smallmatrix}\right) $ one has
  $\nu(A)=[-1,3]$ with $m(A)=-1$, so while $E'(0)=1$ holds by \eqref{E'-id}, indicating a
  \emph{growth} at $t=0$, the choice $u_0=e_2=(0,1)$ gives, because of \eqref{h'-eq},
  \begin{equation}
    h'(0)=-\Re\ip{Ae_2}{e_2}=-3<1=E'(0).
  \end{equation}
 This sharp contrast between the properties of the solutions to $u'+Au=0$ with $u(0)=u_0$ and those of
 $\|e^{-tA}\|$ also motivates the study of the height function $h(t)=|e^{-tA}u_0|$ in the present paper.
\end{rem}

\begin{rem} \label{contraction-rem}
  If the generator $-A$ of the uniformly bounded holomorphic semigroup is dissipative, i.e.\ $A$
  is accretive, then $e^{-tA}$ is a classical contraction semigroup; cf.\ \cite[Cor.\ 14.11]{G09}. 
  That is, $\|e^{-tA}\|\le 1$ holds for the operator norm for $t\ge0$, whence $h(t)\le
  \|e^{-tA}\|\cdot |u_0|\le |u_0|$---i.e.\ an estimate by a constant. If $m(A)>0$ it is also classical that
  $-(A-\varepsilon I)$ for $0<\varepsilon<m(A)$ gives the contractions
  $e^{-t(A-\varepsilon I)}=e^{t\varepsilon}e^{-tA}$, so the sharper estimate $|e^{-tA}u_0|\le
  e^{-t\varepsilon}|u_0|$ holds for $t\ge0$ and any $\varepsilon\in\,]0,m(A)[\,$, hence also for
  $\varepsilon=m(A)$.
  But this exponential decay is just a crude estimate that requires
  $m(A)>0$. For comparison it is observed that if $A$ just satisfies the log-convexity
  criterion \eqref{A-cond'}, so that $m(A)=0$ is
  possible and Theorem~\ref{logcon-thm} applies, the log-convex and strictly decreasing
  behaviour of the height function $|e^{-tA}u_0|$ constitutes a rather more precise dynamical
  property of the evolution problem $u'+Au=0$, $u(0)=u_0$.
\end{rem}


\begin{thebibliography}{MVL03}

\bibitem[CJ18a]{ChJo18ax}
A.-E. Christensen and J.~Johnsen, \emph{Final value problems for parabolic
  differential equations and their well-posedness}, {Axioms} \textbf{7} (2018),
  article no. 31; 1--36.

\bibitem[CJ18b]{ChJo18}
\bysame, \emph{On parabolic final value problems and well-posedness}, {C. R.
  Acad. Sci. Paris, Ser. I} \textbf{356} (2018), 301--305.

\bibitem[Gru09]{G09}
G.~Grubb, \emph{{Distributions and operators}}, {Graduate Texts in
  Mathematics}, vol. 252, {Springer}, New York, 2009.

\bibitem[Hel13]{Hel13}
B.~Helffer, \emph{{Spectral theory and its applications}}, Cambridge Studies in
  Advanced Mathematics, vol. 139, Cambridge University Press, Cambridge, 2013.

\bibitem[H{\"o}r85]{H}
L.~H{\"o}rmander, \emph{{The analysis of linear partial differential
  operators}}, Grundlehren der mathematischen Wissenschaften, Springer Verlag,
  Berlin, 1983, 1985.

\bibitem[Jan94]{Jan94}
J.~Janas, \emph{On unbounded hyponormal operators. {III}}, Studia Math.
  \textbf{112} (1994), no.~1, 75--82.

\bibitem[Kat95]{Kat95}
T.~Kato, \emph{Perturbation theory for linear operators}, Classics in
  Mathematics, Springer-Verlag, Berlin, 1995, Reprint of the 1980 edition.

\bibitem[Lan72]{Lan}
S.~Lang, \emph{{Differential manifolds}}, Addison Wesley, 1972.

\bibitem[MVL03]{MoVL03}
C.~Moler and C.~Van~Loan, \emph{{N}ineteen dubious ways to compute the
  exponential of a matrix, twenty-five years later}, SIAM Rev. \textbf{45}
  (2003), 3--49.

\bibitem[NP06]{NiPe06}
C.~P. Niculescu and L.-E. Persson, \emph{Convex functions and their
  applications. a contemporary approach}, CMS Books in Mathematics/Ouvrages de
  Math\'ematiques de la SMC, vol.~23, Springer, New York, 2006.

\bibitem[Paz83]{Paz83}
A.~Pazy, \emph{Semigroups of linear operators and applications to partial
  differential equations}, Applied Mathematical Sciences, vol.~44,
  Springer-Verlag, New York, 1983.

\bibitem[Ped89]{GKP}
G.~K. Pedersen, \emph{Analysis now}, Springer Verlag, 1989.

\bibitem[Per01]{Per01}
L.~Perko, \emph{{Differential equations and dynamical systems}}, third ed.,
  Texts in Applied Mathematics, vol.~7, Springer-Verlag, New York, 2001.

\bibitem[Rau91]{Rau91}
J.~Rauch, \emph{Partial differential equations}, Springer, 1991.

\bibitem[Sch66]{Swz66}
L.~Schwartz, \emph{Th{\'e}orie des distributions}, revised and enlarged ed.,
  Hermann, Paris, 1966.

\bibitem[Sho74]{Sho74}
R.~E. Showalter, \emph{The final value problem for evolution equations}, J.
  Math. Anal. Appl. \textbf{47} (1974), 563--572.

\bibitem[TE05]{EmTr05}
L.~N. Trefethen and M.~Embree, \emph{Spectra and pseudospectra}, Princeton
  University Press, Princeton, NJ, 2005, The behavior of nonnormal matrices and
  operators.

\end{thebibliography}

\providecommand{\bysame}{\leavevmode\hbox to3em{\hrulefill}\thinspace}
\providecommand{\MR}{\relax\ifhmode\unskip\space\fi MR }
\providecommand{\MRhref}[2]{%
  \href{http://www.ams.org/mathscinet-getitem?mr=#1}{#2}
}
\providecommand{\href}[2]{#2}

\end{document}